\newtheorem{theorem}{Theorem}
\newtheorem{corollary}[theorem]{Corollary}
\newtheorem{lemma}[theorem]{Lemma}
\newenvironment{proof}[1][Proof]{\noindent\textbf{#1.} }{\ \rule{0.5em}{0.5em}}
\begin{document}

\title{On the Determinants and Inverses of Circulant Matrices with Pell and
Pell-Lucas Numbers}
\author{Durmu\c{s} Bozkurt\thanks{%
e-mail: dbozkurt@selcuk.edu.tr} \& Fatih Y\i lmaz\thanks{%
e-mail: fyilmaz@selcuk.edu.tr} \and Department of Mathematics, Science
Faculty, \and Sel\c{c}uk University, 42075 Kampus, Konya, Turkey}
\maketitle

\begin{abstract}
Let $\mathbb{P}=circ(P_{1},P_{2},\ldots ,P_{n})$ and $\mathbb{Q}%
=circ(Q_{1},Q_{2},\ldots ,Q_{n})\ $be $n\times n$ circulant matrices where $%
P_{n}$ and $Q_{n}$ are $n$th Pell and Pell-Lucas numbers, respectively. The
determinants of the matrices $\mathbb{P}$\ and $\mathbb{Q}$\ were expressed
by the Pell and Pell-Lucas numbers. After, we prove that the matrices $%
\mathbb{P}$\ and $\mathbb{Q}$\ are the invertible for $n\geq 3$ and then the
inverses of the matrices $\mathbb{P}$ and $\mathbb{Q}$\ are derived.
\end{abstract}

\section{Introduction}

\bigskip Circulant matrices have a widely range of application in signal
processing, coding theory, image processing, digital image disposal, design
of self-regress and so on. Also, numerical solutions of the certain types of
elliptic and parabolic partial differential equations with periodic boundary
conditions often involve linear systems $Cx=b$ with $C$ a circulant matrix
[9-11].

The $n\times n$ circulant matrix $C_{n}=circ(c_{0},c_{1},\ldots ,c_{n-1}),$\
associated with the numbers $c_{0},c_{1},\ldots ,c_{n-1},$\ is defined by%
\begin{equation}
C_{n}:=\left[ 
\begin{array}{ccccc}
c_{0} & c_{1} & \ldots & c_{n-2} & c_{n-1} \\ 
c_{n-1} & c_{0} & \ldots & c_{n-3} & c_{n-2} \\ 
\vdots & \vdots & \ddots & \vdots & \vdots \\ 
c_{2} & c_{3} & \ldots & c_{0} & c_{1} \\ 
c_{1} & c_{2} & \ldots & c_{n-1} & c_{0}%
\end{array}%
\right] .  \label{1}
\end{equation}

\bigskip Circulant matrices have a {wide} range of {applications, for
examples} in signal processing, coding theory, image processing, digital
image disposal, self-regress {\ design} and so on. {Numerical} solutions of
the certain types of elliptic and parabolic partial differential equations
with periodic boundary conditions often involve linear systems {associated
with circulant matrices} [9,10].

\bigskip The eigenvalues and eigenvectors of $C_{n}$ are well-known [4,14]:%
\begin{equation*}
\lambda _{j}=\dsum\limits_{k=0}^{n-1}c_{k}\omega ^{jk},\ \ \ \ \
j=0,1,\ldots ,n-1,
\end{equation*}%
where $\omega :=\exp (\frac{2\pi i}{n})$ and $i:=\sqrt{-1}$ and the
corresponding eigenvectors%
\begin{equation*}
x_{j}=(1,\omega ^{j},\omega ^{2j},\ldots ,\omega ^{(n-1)j}),\ \ \
j=0,1,\ldots ,n-1.
\end{equation*}

Thus we have the determinants and inverses of nonsingular circulant matrices
[1,3,4,14]:%
\begin{equation*}
\det (C_{n})=\dprod\limits_{j=0}^{n-1}(\dsum\limits_{k=0}^{n-1}c_{k}\omega
^{jk})
\end{equation*}%
and%
\begin{equation*}
C_{n}^{-1}=circ(a_{0},a_{1},\ldots ,a_{n-1})
\end{equation*}%
where $a_{k}=\frac{1}{n}\tsum\nolimits_{j=0}^{n-1}\lambda _{j}\omega ^{-jk}$ 
$,$ and $k=0,1,\ldots ,n-1)~[4].$ When $n$ is getting large,the above the
determinant and inverse formulas are not very handy to use. If there is some
structure among $c_{0},c_{1},\ldots ,c_{n-1},$\ we may be able to get more
explicit forms of the eigenvalues , determinants and inverses of $C_{n}.$
Recently, studies on the circulant matrices involving interesting number
sequences appeared. In [1] the determinants and inverses of the\linebreak
circulant matrices $A_{n}=circ(F_{1},F_{2},\ldots ,F_{n})$ and $%
B_{n}=circ(L_{1},L_{2},\ldots ,L_{n})$\ are derived where $F_{n}$ and $L_{n}$%
\ are $n$th Fibonacci and Lucas numbers, respectively. In [2] the $r$%
-circulant matrix is defined and its norms was computed. The norms of
Toeplitz matrices [??] involving Fibonacci and Lucas numbers are obtained
[5]. Miladinovic and Stanimirovic [6] gave an explicit formula of the
Moore-Penrose inverse of singular generalized Fibonacci matrix. Lee and et
al. found the factorizations and eigenvalues of Fibonacci and symmetric
Fibonacci matrices [7].

\bigskip The Pell and Pell-Lucas sequences $\{P_{n}\}$ and $\{Q_{n}\}$ are
defined by $P_{n}=2P_{n-1}+P_{n-2}$ and $Q_{n}=2Q_{n-1}+Q_{n-2}$\ with
initial conditions $P_{0}=0$ and $P_{1}=1$ and $Q_{0}=2$ and $Q_{1}=2$\ for $%
n\geq 2,$ respectively. The Pell numbers are used to approximate the square
root of two, to find square triangular numbers, to construct integer
approximations to the right isosceles triangle, and to solve certain
combinatorial enumeration problems. Let $\mathbb{P}=circ(P_{1},P_{2},\ldots
,P_{n})$\ and $\mathbb{Q}=circ(Q_{1},Q_{2},\ldots ,Q_{n}).$\ The \ aim of
this paper is to establish some useful formulas for the determinants and
inverses of $\mathbb{P}$ and $\mathbb{Q}$\ using the nice properties of the
Pell and Pell-Lucas numbers. Matrix decompositions are derived for $\mathbb{P%
}$ and $\mathbb{Q}$\ in order to obtain the results.

\section{Determinants of circulant matrices with the\protect\linebreak Pell
and Pell-Lucas numbers}

Recall that $\mathbb{P}=circ(P_{1},P_{2},\ldots ,P_{n})$ and $\mathbb{Q}%
=circ(Q_{0},Q_{1},\ldots ,Q_{n-1}),$ i.e. where $P_{k}$ and $Q_{k}$\ are $k$%
th Pell and Pell-Lucas numbers, respectively, with the recurence relations $%
P_{k}=2P_{k-1}+P_{k-2},$ $Q_{k}=2Q_{k-1}+Q_{k-2},$\ the initial conditions $%
P_{0}=0,$ $P_{1}=1,$ $Q_{0}=2$ and $Q_{1}=2$\ ($k\geq 2).$\ Let $\alpha $\
and $\beta $\ be the roots of $x^{2}-2x-1=0.$ Using the Binet formulas [11,
p. 142] for the sequences $\{P_{n}\}$ and $\{Q_{n}\},$\ one has%
\begin{equation}
P_{n}=\frac{\alpha ^{n}-\beta ^{n}}{2\sqrt{2}}  \label{2}
\end{equation}%
and%
\begin{equation}
Q_{n}=\alpha ^{n}+\beta ^{n}\text{.}  \label{3}
\end{equation}%
\ 

\begin{theorem}
Let \bigskip $n\geq 3.$ Then%
\begin{equation}
\det (\mathbb{P})=(P_{1}-P_{n+1})^{n-2}(P_{1}-2P_{n})+\dsum%
\limits_{k=2}^{n-1}\left[ P_{k-1}P_{n}^{n-k}(P_{1}-P_{n+1})^{k-2}\right]
\label{4}
\end{equation}%
where $P_{k}$ is $k$th Pell number.
\end{theorem}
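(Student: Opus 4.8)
The plan is to compute $\det(\mathbb{P})$ by reducing the circulant matrix to an upper-triangular form via elementary row and column operations that exploit the Pell recurrence $P_{k}=2P_{k-1}+P_{k-2}$. First I would write out $\mathbb{P}$ explicitly with entries $P_1,P_2,\ldots,P_n$ cycling through each row. The key structural observation is that the circulant shift combined with the linear recurrence should let me eliminate most entries. Concretely, I would subtract suitable multiples of one row (or column) from adjacent ones so that the recurrence $P_{k}=2P_{k-1}+P_{k-2}$ collapses triples of consecutive Pell numbers; a natural choice is a combination like (row $j$) $-\,2\,$(row $j{+}1$) $-\,$(row $j{+}2$), since that annihilates the bulk of the Pell entries and leaves behind only boundary terms involving $P_1,P_n,P_{n+1}$.

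Concretely, I would multiply $\mathbb{P}$ on the right (or left) by an explicit sparse transformation matrix $\Gamma$ whose determinant is $\pm 1$, chosen so that $\mathbb{P}\Gamma$ is upper (or lower) triangular up to a small permutation. The factor $(P_{1}-P_{n+1})^{n-2}$ in the claimed formula strongly suggests that $n-2$ of the diagonal entries after triangularization each equal $P_{1}-P_{n+1}$, with the remaining $2\times 2$ (or bordered) block contributing the term $(P_{1}-2P_{n})$ together with the summation $\sum_{k=2}^{n-1}P_{k-1}P_{n}^{n-k}(P_{1}-P_{n+1})^{k-2}$. I would therefore aim to exhibit the decomposition $\mathbb{P}=\Delta_{1}\,\Gamma\,\Delta_{2}$ where $\Delta_{1},\Delta_{2}$ are triangular with unit or easily-computed determinants and $\Gamma$ is a matrix whose determinant I can read off by Laplace expansion along its first column or last row.

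The main obstacle I expect is bookkeeping the boundary contributions correctly: the recurrence-based elimination works cleanly in the interior but produces correction terms at the first and last rows/columns, and these corrections are exactly what generate the nontrivial summation term. Getting the exponents $n-k$ on $P_n$ and $k-2$ on $(P_1-P_{n+1})$ to match requires tracking how the elimination propagates, which resembles expanding a determinant with a Hessenberg-like structure. I anticipate that after the row operations the residual matrix is lower Hessenberg (nonzero entries only on and below the superdiagonal), so its determinant satisfies a short recurrence in $n$; solving that recurrence should reproduce the geometric-type sum $\sum_{k=2}^{n-1}P_{k-1}P_{n}^{n-k}(P_{1}-P_{n+1})^{k-2}$.

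To finish, I would verify the formula directly for the base case $n=3$ by hand, confirming that the closed form $(P_{1}-P_{n+1})^{n-2}(P_{1}-2P_{n})+\sum_{k=2}^{n-1}P_{k-1}P_{n}^{n-k}(P_{1}-P_{n+1})^{k-2}$ agrees with the explicit $3\times 3$ determinant, and then argue that the triangularization (or the Hessenberg recurrence) extends the identity to all $n\geq 3$ by induction. Throughout, the Binet formulas \eqref{2} and the identity $P_{1}-P_{n+1}$ appearing as the repeated diagonal entry are the algebraic levers I would lean on, but the genuine work is purely the linear-algebraic reduction rather than any deep number-theoretic input.
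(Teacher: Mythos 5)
Your proposal follows essentially the same route as the paper: the authors multiply $\mathbb{P}$ on the left and right by explicit unimodular matrices $M$ and $N$ built from exactly the recurrence combination $\mathrm{row}_{j}-2\,\mathrm{row}_{j+1}-\mathrm{row}_{j+2}$ that you describe, obtaining a Hessenberg matrix with $n-2$ diagonal entries equal to $P_{1}-P_{n+1}$ and a bordered block contributing $P_{1}-2P_{n}$ together with the summation, and they too check $n=3$ separately. The only cosmetic difference is that the paper collapses the sum by placing the geometric powers $\left(\frac{P_{n}}{P_{1}-P_{n+1}}\right)^{n-k}$ in a column of $N$ rather than by solving the Hessenberg determinant recurrence you anticipate, but this amounts to the same computation.
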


\begin{proof}
Obviously, $\det (\mathbb{P})=104$ for $n=3.$ It satisfies (\ref{4}). For $%
n>3,$ we selec the matrices $M$ and $N$ so that when we multiply $\mathbb{P}$%
\ with $M$ on the left and $N$ on the right we obtain a special Hessenberg
matrix that have nonzero entries only on first two rows, main diagonal and
subdiagonal: \ 
\begin{equation}
M:=\left[ 
\begin{array}{rrrrrrr}
1 & 0 & 0 & 0 & \ldots & 0 & 0 \\ 
-2 & 0 & 0 & 0 & \ldots & 0 & 1 \\ 
-1 & 0 & 0 & 0 & \ldots & 1 & -2 \\ 
0 & 0 & 0 & 0 & \ldots & -2 & -1 \\ 
\vdots & \vdots & \vdots & \vdots & \ddots & \vdots & \vdots \\ 
0 & 0 & 1 & -2 & \ldots & 0 & 0 \\ 
0 & 1 & -2 & -1 & \ldots & 0 & 0%
\end{array}%
\right]  \label{5}
\end{equation}%
and%
\begin{equation*}
N:=\left[ 
\begin{array}{cccccc}
1 & 0 & 0 & \ldots & 0 & 0 \\ 
0 & \left( \frac{P_{n}}{P_{1}-P_{n+1}}\right) ^{n-2} & 0 & \ldots & 0 & 1 \\ 
0 & \left( \frac{P_{n}}{P_{1}-P_{n+1}}\right) ^{n-3} & 0 & \ldots & 1 & 0 \\ 
0 & \left( \frac{P_{n}}{P_{1}-P_{n+1}}\right) ^{n-4} & 0 & \ldots & 0 & 0 \\ 
\vdots & \vdots & \vdots &  & \vdots & \vdots \\ 
0 & \left( \frac{P_{n}}{P_{1}-P_{n+1}}\right) & 1 & \ldots & 0 & 0 \\ 
0 & 1 & 0 & \ldots & 0 & 0%
\end{array}%
\right] .
\end{equation*}%
Notice that we obtain the following equivalence:%
\begin{eqnarray*}
S &=&M\mathbb{P}N \\
&=&\left[ 
\begin{array}{ccccccc}
1 & g_{n}^{\prime } & P_{n-1} & P_{n-2} & P_{n-3} & \ldots & P_{2} \\ 
& g_{n} & P_{n-2} & P_{n-3} & P_{n-4} & \ldots & P_{1} \\ 
&  & P_{1}-P_{n+1} &  &  &  &  \\ 
&  & -P_{n} & P_{1}-P_{n+1} &  &  & 0 \\ 
&  &  & -P_{n} & P_{1}-P_{n+1} &  &  \\ 
&  &  &  &  &  &  \\ 
& 0 &  &  & -P_{n} & P_{1}-P_{n+1} &  \\ 
&  &  &  &  & -P_{n} & P_{1}-P_{n+1}%
\end{array}%
\right]
\end{eqnarray*}%
and $S$ is Hessenberg matrix, where 
\begin{equation*}
g_{n}^{\prime }:=P_{n}+\dsum\limits_{k=2}^{n-1}P_{k}\left( \frac{P_{n}}{%
1-P_{n+1}}\right) ^{n-k},
\end{equation*}%
\begin{equation*}
g_{n}:=P_{1}-2P_{n}+\dsum\limits_{k=2}^{n-1}P_{k-1}\left( \frac{P_{n}}{%
P_{1}-P_{n+1}}\right) ^{n-k}.
\end{equation*}
Then we have%
\begin{equation*}
\det (S)=\det (M)\det (\mathbb{P}_{n})\det (N)=(P_{1}-P_{n+1})^{n-2}g_{n}.
\end{equation*}%
Since%
\begin{equation*}
\det (M)=\det (N)=\left\{ 
\begin{array}{l}
\ \ 1,\ n\equiv 1\ or\ 2\ \func{mod}4 \\ 
-1,\ n\equiv 0\ or\ 3\ \func{mod}4%
\end{array}%
\right.
\end{equation*}%
for all $n>3,$%
\begin{equation*}
\det (M)\det (N)=1
\end{equation*}%
and (\ref{4}) follows.
\end{proof}

\begin{theorem}
\bigskip \bigskip Let \bigskip $n\geq 3.$ Then%
\begin{equation}
\det (\mathbb{Q})=2(2-Q_{n+1})^{n-2}(2-3Q_{n})+2\dsum\limits_{k=2}^{n-1}%
\left[ (Q_{k+1}-3Q_{k})(Q_{n}-2)^{n-k}(2-Q_{n+1})^{k-2}\right] .  \label{6}
\end{equation}%
where $Q_{k}$ is $k$th Pell-Lucas number.
\end{theorem}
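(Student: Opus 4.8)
The plan is to imitate the proof of Theorem 1, exploiting that the Pell--Lucas numbers satisfy the \emph{same} recurrence $Q_{k+1}=2Q_{k}+Q_{k-1}$ as the Pell numbers; only the seed values change, from $P_{0}=0,\,P_{1}=1$ to $Q_{0}=Q_{1}=2$. First I would settle $n=3$ directly: $\mathbb{Q}=circ(2,6,14)$ has $\det(\mathbb{Q})=2464$, and inserting $Q_{3}=14,\,Q_{4}=34$ into the right-hand side of (\ref{6}) returns the same number, so the formula holds at $n=3$.

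For $n>3$ I would keep the very same left factor $M$ of (\ref{5}), since its rows encode the annihilating combination $\mathrm{Row}_{i}-2\,\mathrm{Row}_{i+1}-\mathrm{Row}_{i+2}$ (with cyclic wraparound) that clears an entry exactly when its three indices obey the recurrence, and this is identical for $Q$. The interior rows then still wipe out the first column below row two and leave a lower bidiagonal tail, except that its subdiagonal is shifted by the seed: the wraparound now contributes $Q_{0}=2$, so the subdiagonal entry is $-(Q_{n}-Q_{0})=-(Q_{n}-2)$ rather than $-P_{n}$, while the diagonal entry is $Q_{1}-Q_{n+1}=2-Q_{n+1}$. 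Accordingly I would take the right factor $N$ to be the analogue of the one in Theorem 1 with $\frac{P_{n}}{P_{1}-P_{n+1}}$ replaced by $\frac{Q_{n}-2}{2-Q_{n+1}}$, so that $S=M\mathbb{Q}N$ is Hessenberg with a dense pair of top rows and a triangular tail. The decisive new feature is that the boundary row no longer produces a clean zero: because $P_{0}=0$ is replaced by $Q_{0}=2$, the $(2,1)$ entry of $S$ equals $Q_{0}=2$ instead of $0$.

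Writing $A=\begin{pmatrix}Q_{1}&g_{n}'\\ Q_{0}&g_{n}\end{pmatrix}$ for the top-left $2\times2$ corner, where $g_{n}'$ and $g_{n}$ are the $(1,2)$ and $(2,2)$ entries after applying $N$, the matrix $S$ is block upper triangular, $S=\left(\begin{smallmatrix}A&C\\0&B\end{smallmatrix}\right)$, with $B$ the $(n-2)\times(n-2)$ bidiagonal tail. Since $\det(M)\det(N)=1$ exactly as in Theorem 1, this gives
\begin{equation*}
\det(\mathbb{Q})=\det(A)\,\det(B)=\bigl(Q_{1}g_{n}-Q_{0}g_{n}'\bigr)(2-Q_{n+1})^{\,n-2}.
\end{equation*}
Computing the two entries explicitly yields
\begin{equation*}
g_{n}'=Q_{n}+\sum_{k=2}^{n-1}Q_{k}\Bigl(\tfrac{Q_{n}-2}{2-Q_{n+1}}\Bigr)^{n-k},\qquad g_{n}=(2-2Q_{n})+\sum_{k=2}^{n-1}Q_{k-1}\Bigl(\tfrac{Q_{n}-2}{2-Q_{n+1}}\Bigr)^{n-k},
\end{equation*}
which are the direct Pell--Lucas analogues of the quantities $g_{n}',g_{n}$ in Theorem 1.

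The main obstacle is the evaluation of the $2\times2$ determinant, and this is exactly where the nonvanishing seed matters. In Theorem 1 the entry $P_{0}=0$ killed the $g_{n}'$ column and left $\det(A)=P_{1}g_{n}=g_{n}$; here $Q_{0}=Q_{1}=2$ forces the two contributions to combine, $\det(A)=2g_{n}-2g_{n}'=2(g_{n}-g_{n}')$. Subtracting the two displayed expressions gives $g_{n}-g_{n}'=(2-3Q_{n})+\sum_{k=2}^{n-1}(Q_{k-1}-Q_{k})\bigl(\tfrac{Q_{n}-2}{2-Q_{n+1}}\bigr)^{n-k}$, and the recurrence identity $Q_{k-1}-Q_{k}=Q_{k+1}-3Q_{k}$ rewrites the coefficient into the form appearing in (\ref{6}). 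Distributing $(2-Q_{n+1})^{n-2}$ across the sum then turns each $\bigl(\tfrac{Q_{n}-2}{2-Q_{n+1}}\bigr)^{n-k}$ into $(Q_{n}-2)^{n-k}(2-Q_{n+1})^{k-2}$, and (\ref{6}) follows. The delicate part throughout is the careful tracking of the boundary and wraparound terms generated by $Q_{0}=Q_{1}=2$, which is what distinguishes this computation from the cleaner Pell case.
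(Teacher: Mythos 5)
Your argument is correct and does reach (\ref{6}), but it departs from the paper's proof at one concrete point: the choice of left multiplier. The paper does not reuse the matrix $M$ of (\ref{5}); it replaces the entry $-2$ in position $(2,1)$ by $-3$, obtaining the matrix $K$ of (\ref{7}). The point of that change is that the second row of $K\mathbb{Q}$ is $-3\,\mathrm{Row}_{1}+\mathrm{Row}_{n}$, whose $j$th entry is $Q_{j+1}-3Q_{j}$; since $Q_{2}-3Q_{1}=0$, the $(2,1)$ entry of $U=K\mathbb{Q}L$ vanishes, $U$ is genuinely Hessenberg with the scalar pivot $Q_{1}$ in the corner, and $\det(U)=Q_{1}(Q_{1}-Q_{n+1})^{n-2}u_{n}$ with $u_{n}=Q_{1}-3Q_{n}+\sum_{k=2}^{n-1}(Q_{k+1}-3Q_{k})\bigl(\tfrac{Q_{n}-2}{Q_{1}-Q_{n+1}}\bigr)^{n-k}$, so the coefficients $Q_{k+1}-3Q_{k}$ of (\ref{6}) appear immediately. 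You instead keep $M$, accept the nonzero $(2,1)$ entry $Q_{0}=2$, evaluate the resulting $2\times 2$ corner block, $\det(A)=Q_{1}g_{n}-Q_{0}g_{n}'=2(g_{n}-g_{n}')$, and then invoke the identity $Q_{k-1}-Q_{k}=Q_{k+1}-3Q_{k}$ to put the answer in the stated form. Both computations are valid and agree: your $g_{n}$ and $g_{n}'$ are indeed the $(2,2)$ and $(1,2)$ entries of $M\mathbb{Q}N$, the tail is lower bidiagonal with diagonal $2-Q_{n+1}$ and subdiagonal $2-Q_{n}$, the block-triangular determinant factorization applies, and $\det(M)\det(N)=1$ is unaffected by the new ratio placed in column $2$ of $N$. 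The paper's variant buys a cleaner reduction (one tailored row operation instead of a block determinant plus a rewriting identity) and makes the provenance of $Q_{k+1}-3Q_{k}$ transparent; yours buys uniformity with Theorem 1 and makes explicit exactly where the nonzero seed $Q_{0}=2$ enters, at the cost of the extra $2\times 2$ evaluation.
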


\begin{proof}
Since $n\geq 3,$ $\det (\mathbb{Q})=2464$ for $n=3,$ satisfies (\ref{6}).
For $n>3,$ we select the matrices $K$ and $L$ so that when we multiply $%
\mathbb{Q}$\ with $K$ on the left and $M$ on the right we obtain a special
Hessenberg matrix that have nonzero entries only on the first two rows, main
diagonal and subdiagonal: 
\begin{equation}
K:=\left[ 
\begin{array}{rrrrrrr}
1 & 0 & 0 & 0 & \ldots & 0 & 0 \\ 
-3 & 0 & 0 & 0 & \ldots & 0 & 1 \\ 
-1 & 0 & 0 & 0 & \ldots & 1 & -2 \\ 
0 & 0 & 0 & 0 & \ldots & -2 & -1 \\ 
\vdots & \vdots & \vdots & \vdots & \ddots & \vdots & \vdots \\ 
0 & 0 & 1 & -2 & \ldots & 0 & 0 \\ 
0 & 1 & -2 & -1 & \ldots & 0 & 0%
\end{array}%
\right]  \label{7}
\end{equation}%
and%
\begin{equation*}
L:=\left[ 
\begin{array}{cccccc}
1 & 0 & 0 & \ldots & 0 & 0 \\ 
0 & \left( \frac{-2+Q_{n}}{Q1-Q_{n+1}}\right) ^{n-2} & 0 & \ldots & 0 & 1 \\ 
0 & \left( \frac{-2+Q_{n}}{Q1-Q_{n+1}}\right) ^{n-3} & 0 & \ldots & 1 & 0 \\ 
0 & \left( \frac{-2+Q_{n}}{Q1-Q_{n+1}}\right) ^{n-4} & 0 & \ldots & 0 & 0 \\ 
\vdots & \vdots & \vdots &  & \vdots & \vdots \\ 
0 & \left( \frac{-2+Q_{n}}{Q1-Q_{n+1}}\right) & 1 & \ldots & 0 & 0 \\ 
0 & 1 & 0 & \ldots & 0 & 0%
\end{array}%
\right] .
\end{equation*}%
We have%
\begin{eqnarray*}
U &=&K\mathbb{Q}L \\
&=&\left[ 
\begin{array}{ccccccc}
Q_{1} & u_{n}^{\prime } & Q_{n-1} & Q_{n-2} & \ldots & Q_{3} & Q_{2} \\ 
& u_{n} & Q_{n}-3Q_{n-1} & Q_{n-1}-3Q_{n-2} & \ldots & Q_{4}-3Q_{3} & 
Q_{3}-3Q_{2} \\ 
&  & Q_{1}-Q_{n+1} &  &  &  &  \\ 
&  & 2-Q_{n} & Q_{1}-Q_{n+1} &  &  &  \\ 
&  &  & 2-Q_{n} &  &  &  \\ 
&  &  &  & \ddots &  &  \\ 
&  &  &  & \ddots & Q_{1}-Q_{n+1} &  \\ 
&  &  &  &  & 2-Q_{n} & Q_{1}-Q_{n+1}%
\end{array}%
\right]
\end{eqnarray*}%
and U is Hessenberg matrix, where%
\begin{equation*}
u_{n}:=Q_{1}-3Q_{n}+\dsum\limits_{k=2}^{n-1}\left( Q_{k+1}-3Q_{k}\right)
\left( \frac{Q_{n}-2}{Q_{1}-Q_{n+1}}\right) ^{n-k},
\end{equation*}%
\linebreak 
\begin{equation*}
u_{n}^{\prime }:=\dsum\limits_{k=2}^{n}Q_{k}\left( \frac{Q_{n}-2}{%
Q_{1}-Q_{n+1}}\right) ^{n-k}.
\end{equation*}%
Then we obtain%
\begin{equation*}
\det (U)=\det (K)\det (\mathbb{Q})\det (L)=Q_{1}(Q_{1}-Q_{n+1})^{n-2}u_{n}.
\end{equation*}%
Since%
\begin{equation*}
\det (K)=\det (L)=\left\{ 
\begin{array}{l}
\ \ 1,\ n\equiv 1\ or\ 2\ \func{mod}4 \\ 
-1,\ n\equiv 0\ or\ 3\ \func{mod}4%
\end{array}%
\right.
\end{equation*}%
for all $n>3,$%
\begin{equation*}
\det (K)\det (L)=1
\end{equation*}
and we have (\ref{6}).
\end{proof}

\section{\protect\bigskip Inverses of $\mathbb{P}$ and $\mathbb{Q}$}

We will use the well-known fact that the inverse of a nonsingular circulant
matrix is also circulant [14, p.84] [12, p.33] [4, p.90-91].

\begin{theorem}
\bigskip Let $\mathbb{P}=circ(P_{1},P_{2},\ldots ,P_{n})$\ is invertible
when $n\geq 3$.
\end{theorem}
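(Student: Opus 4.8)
The plan is to show that no eigenvalue of $\mathbb{P}$ vanishes, since a circulant matrix is invertible precisely when $\det(\mathbb{P})=\prod_{j=0}^{n-1}\lambda_{j}\neq 0$. Writing $c_{k}=P_{k+1}$ and using the eigenvalue formula recalled in the introduction, each eigenvalue is $\lambda_{j}=\sum_{k=1}^{n}P_{k}\,\omega^{(k-1)j}$ with $\omega=\exp(2\pi i/n)$. First I would substitute the Binet formula (\ref{2}) and split $\lambda_{j}$ into two geometric sums in $\alpha\omega^{j}$ and $\beta\omega^{j}$. Because $\omega^{nj}=1$, the partial sums collapse cleanly: $\sum_{k=1}^{n}(\alpha\omega^{j})^{k}\omega^{-j}=\alpha(\alpha^{n}-1)/(\alpha\omega^{j}-1)$, and likewise for $\beta$, giving
\begin{equation*}
\lambda_{j}=\frac{1}{2\sqrt{2}}\left[\frac{\alpha(\alpha^{n}-1)}{\alpha\omega^{j}-1}-\frac{\beta(\beta^{n}-1)}{\beta\omega^{j}-1}\right].
\end{equation*}

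A preliminary step is to verify that the two denominators never vanish. Since $\alpha=1+\sqrt{2}$ satisfies $|\alpha|>1$ and $\beta=1-\sqrt{2}$ satisfies $|\beta|<1$, while $|\omega^{j}|=1$, we have $|\alpha\omega^{j}|=|\alpha|\neq 1$ and $|\beta\omega^{j}|=|\beta|\neq 1$, so $\alpha\omega^{j}-1\neq 0$ and $\beta\omega^{j}-1\neq 0$. Hence the closed form above is valid for every $j$.

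Next I would set $\lambda_{j}=0$ and clear denominators, so that $\alpha(\alpha^{n}-1)(\beta\omega^{j}-1)=\beta(\beta^{n}-1)(\alpha\omega^{j}-1)$. Expanding and repeatedly using $\alpha\beta=-1$ collapses most terms; after the common $\omega^{j}$ contributions cancel and one divides by $2\sqrt{2}$, the Binet identities $\alpha^{n}-\beta^{n}=2\sqrt{2}\,P_{n}$ and $\alpha^{n+1}-\beta^{n+1}=2\sqrt{2}\,P_{n+1}$ reduce the equation to
\begin{equation*}
-\,P_{n}\,\omega^{j}=P_{n+1}-P_{1},\qquad\text{i.e.}\qquad \omega^{j}=\frac{P_{1}-P_{n+1}}{P_{n}}.
\end{equation*}
This is the computational heart of the argument and the step I expect to require the most care, since the expansion must be organized so that the $\alpha\beta=-1$ reductions and the Binet substitutions land on exactly the right Pell indices.

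Finally I would derive a contradiction from modulus considerations. The right-hand side is a real number, and for $n\geq 2$ one has $P_{n+1}-P_{1}=P_{n+1}-1>P_{n}$ because $P_{n+1}-P_{n}=P_{n}+P_{n-1}\geq P_{2}+P_{1}=3$. Thus $\bigl|(P_{1}-P_{n+1})/P_{n}\bigr|>1$, whereas $|\omega^{j}|=1$, which is impossible. Therefore $\lambda_{j}\neq 0$ for every $j=0,1,\ldots,n-1$, so $\det(\mathbb{P})=\prod_{j}\lambda_{j}\neq 0$ and $\mathbb{P}$ is invertible for all $n\geq 3$ (indeed for all $n\geq 2$).
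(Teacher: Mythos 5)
Your proposal is correct and follows essentially the same route as the paper: compute the eigenvalues $\lambda_{j}$ of the circulant matrix via the Binet formula and geometric sums, and show that $\lambda_{j}=0$ would force $\omega^{j}=(P_{1}-P_{n+1})/P_{n}$, which is impossible. The only divergence is the final contradiction: the paper argues that $\omega^{k}$ would then be real, hence equal to $-1$, which is not a root of $1-P_{n+1}-P_{n}x=0$ (and it treats $n=3,4$ and the eigenvalue $\lambda_{0}$ separately via the determinant values and the cited Lemma 1.1), whereas your modulus estimate $|P_{1}-P_{n+1}|=P_{n+1}-1>P_{n}$ against $|\omega^{j}|=1$ is cleaner and covers all $j$ and all $n\geq 2$ uniformly.
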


\begin{proof}
From Theorem 1, we have $\det (\mathbb{P})=104\neq 0$ and $\det (\mathbb{P}%
)=-18560\neq 0$ for $n=3$ and $n=4,$ respectively. Then $\mathbb{P}$ is the
invertible for $n=3,4$. Let $n\geq 5.$ TheBinet formula for Pell numbers
gives $P_{n}=\frac{\alpha ^{n}-\beta ^{n}}{2\sqrt{2}}$ where $\alpha +\beta
=2,\alpha \beta =-1$ and $\alpha -\beta =2\sqrt{2}.$ Then we have%
\begin{eqnarray*}
u(\omega ^{k}) &=&\dsum\limits_{r=1}^{n}P_{r}\omega
^{kr-k}=\dsum\limits_{r=1}^{n}\left( \frac{\alpha ^{r}-\beta ^{r}}{2\sqrt{2}}%
\right) \omega ^{kr-k}=\frac{1}{2\sqrt{2}}\dsum\limits_{r=1}^{n}\left(
\alpha ^{r}-\beta ^{r}\right) \omega ^{kr-k} \\
&=&\frac{1}{2\sqrt{2}}\left[ \frac{\alpha (1-\alpha ^{n})}{1-\alpha \omega
^{k}}-\frac{\beta (1-\beta ^{n})}{1-\beta \omega ^{k}}\right] ,\ \ (1-\alpha
\omega ^{k},1-\beta \omega ^{k}\neq 0) \\
&=&\frac{1}{2\sqrt{2}}\left( \frac{(\alpha -\beta )-(\alpha ^{n+1}-\beta
^{n+1})+\alpha \beta \omega ^{k}(\alpha ^{n}-\beta ^{n})}{1-\alpha \omega
^{k}-\beta \omega ^{k}+\alpha \beta \omega ^{2k}}\right) \\
&=&\frac{1-P_{n+1}-P_{n}\omega ^{k}}{1-2\omega ^{k}-\omega ^{2k}},\ \
k=1,2,\ldots ,n-1.
\end{eqnarray*}%
If there existed $\omega ^{k}$ $(k=1,2,\ldots ,n-1)$ such that $u(\omega
^{k})=0,$ then we would have $1-P_{n+1}-P_{n}\omega ^{k}=0$ for $1-2\omega
^{k}-\omega ^{2k}\neq 0.$ Hence $\omega ^{k}=\frac{1-P_{n+1}}{P_{n}}.$ It is
well known that%
\begin{equation}
\omega ^{k}=\exp \left( \frac{2k\pi i}{n}\right) =\cos \left( \frac{2k\pi }{n%
}\right) +i\sin \left( \frac{2k\pi }{n}\right)  \label{8}
\end{equation}%
where $i:=\sqrt{-1}.$ Since $\omega ^{k}=\frac{1-P_{n+1}}{P_{n}}$ is a real
number, $\sin \left( \frac{2k\pi }{n}\right) =0$ so that $\omega ^{k}=-1$
for $0<\frac{2k\pi }{n}<2\pi .$ However $x=-1$ is not a root of the equation 
$1-P_{n+1}-P_{n}x=0$ $(n\geq 5),$ a contradiction. i.e. $u(\omega ^{k})\neq
0 $ for any $\omega ^{k}$ where $k=1,2,\ldots ,n-1$ and $n\geq 5.$ Thus, the
proof is completed by [1, Lemma 1.1].
\end{proof}

\begin{lemma}
Let $C=\bigskip (c_{ij})$ be an $(n-2)\times (n-2)$ matrix defined by%
\begin{equation*}
c_{ij}=\left\{ 
\begin{array}{l}
P_{1}-P_{n+1},i=j \\ 
-P_{n},\ \ \ \ \ \ \ \ \ i=j+1 \\ 
0,\ \ \ \ \ \ \ \ \ \ otherwise.%
\end{array}%
\right.
\end{equation*}%
Then $C^{-1}=(c_{ij}^{^{\prime }})$ is given by%
\begin{equation*}
c_{ij}^{\prime }=\left\{ 
\begin{array}{l}
\frac{P_{n}^{i-j}}{(P_{1}-P_{n+1})^{i-j+1}},i\geq j \\ 
0,\ \ \ \ \ \ \ \ \ \ otherwise.%
\end{array}%
\right.
\end{equation*}
\end{lemma}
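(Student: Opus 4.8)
The plan is to verify the claimed formula directly, by checking that $CC^{-1}=I$ and exploiting the fact that $C$ is lower bidiagonal, so that each of its rows has at most two nonzero entries. To keep the notation light I would set $a:=P_{1}-P_{n+1}$ and $b:=P_{n}$, so that $C$ carries $a$ on its main diagonal and $-b$ on its first subdiagonal, while the candidate inverse $C'=(c'_{ij})$ is lower triangular with $c'_{ij}=b^{\,i-j}/a^{\,i-j+1}$ for $i\ge j$ and $c'_{ij}=0$ otherwise. Before anything else I would record that $a=P_{1}-P_{n+1}\ne 0$ for $n\ge 3$ (since $P_{n+1}\ge P_{4}=12>1=P_{1}$), so that $C$ is genuinely invertible and all the entries of $C'$ are well defined.

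The core of the argument is a single two-term computation. Because $c_{ik}=0$ unless $k=i$ or $k=i-1$, the matrix product collapses to
\[
(CC')_{ij}=c_{ii}\,c'_{ij}+c_{i,i-1}\,c'_{i-1,j}=a\,c'_{ij}-b\,c'_{i-1,j}
\]
for all $i,j$, and the whole proof reduces to evaluating this expression in the three regimes $i=j$, $i<j$, and $i>j$. On the diagonal $i=j$ the second term vanishes, since $c'_{i-1,i}$ has row index smaller than its column index, leaving $a\cdot(1/a)=1$. For $i<j$ both surviving entries lie strictly above the diagonal of $C'$ and hence vanish, giving $0$. The only regime requiring an actual calculation is $i>j$, where both terms are present and one substitutes the closed forms
\[
a\cdot\frac{b^{\,i-j}}{a^{\,i-j+1}}-b\cdot\frac{b^{\,i-1-j}}{a^{\,i-j}}=\frac{b^{\,i-j}}{a^{\,i-j}}-\frac{b^{\,i-j}}{a^{\,i-j}}=0,
\]
so that the two contributions cancel exactly.

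The only point demanding care, and the step I expect to be the main obstacle, is bookkeeping with the index ranges: one must confirm that in the subdiagonal term $c'_{i-1,j}$ the downward shift from $i$ to $i-1$ keeps the pair inside the lower-triangular region when $i>j$ (it does, because then $i-1\ge j$), while pushing it outside when $i=j$ (because then $i-1<i=j$), which is exactly what produces the clean diagonal value $1$. An equivalent and more conceptual route, which I would mention as a sanity check, is to write $C=a\bigl(I-(b/a)N\bigr)$, where $N$ is the nilpotent matrix carrying $1$'s on the first subdiagonal, and then read off the finite Neumann series $C^{-1}=a^{-1}\sum_{k=0}^{n-3}(b/a)^{k}N^{k}$. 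Since $N^{k}$ places $1$'s precisely on the $k$th subdiagonal and $N^{n-2}=0$, this expansion reproduces $c'_{ij}=b^{\,i-j}/a^{\,i-j+1}$ for $i\ge j$ at once, confirming the stated formula.
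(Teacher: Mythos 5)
Your proof is correct and follows essentially the same route as the paper: a direct verification that $CC^{-1}=I_{n-2}$, reducing each entry of the product to the two-term sum $c_{ii}c'_{ij}+c_{i,i-1}c'_{i-1,j}$ and checking the cases $i=j$, $i>j$, $i<j$. Your added observations --- that $P_{1}-P_{n+1}\neq 0$ for $n\geq 3$, and the Neumann-series identity $C^{-1}=a^{-1}\sum_{k\geq 0}(b/a)^{k}N^{k}$ --- are correct refinements the paper omits, but the core argument is the same.
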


\begin{proof}
Let $A:=(a_{ij})=CC^{-1}.$ Clearly, $a_{ij}=\tsum%
\nolimits_{k=1}^{n-2}c_{ik}c_{kj}^{^{\prime }}.$ When $i=j,$ we have%
\begin{equation*}
a_{ii}=(P_{1}-P_{n+1}).\frac{1}{P_{1}-P_{n+1}}=1.
\end{equation*}%
If $i>j,$ then%
\begin{eqnarray*}
a_{ij} &=&\tsum\nolimits_{k=1}^{n-2}c_{ik}c_{kj}^{^{\prime
}}=c_{i,i-1}c_{i-1,j}^{^{\prime }}+c_{ii}c_{ij}^{^{\prime }} \\
&=&-P_{n}\frac{P_{n}^{i-j-1}}{(P_{1}-P_{n+1})^{i-j}}+(P_{1}-P_{n+1})\frac{%
P_{n}^{i-j}}{(P_{1}-P_{n+1})^{i-j+1}}=0;
\end{eqnarray*}%
similar for $i<j.$ Thus, $CC^{-1}=I_{n-2}$.
\end{proof}

\begin{theorem}
\bigskip Let the matrix $\mathbb{P\ }$be $\mathbb{P}=$ $circ(P_{1},P_{2},%
\ldots ,P_{n})$ ($n\geq 3$). Then the inverse of the matrix $\mathbb{P}$ is%
\begin{equation*}
\mathbb{P}^{-1}=circ(p_{1},p_{2},\ldots ,p_{n})
\end{equation*}%
where%
\begin{eqnarray*}
p_{1} &=&\frac{1}{g_{n}}\left( 1+\frac{2P_{n}^{n-3}}{(P_{1}-P_{n+1})^{n-2}}%
+\dsum\limits_{k=1}^{n-3}\frac{P_{n-k}P_{n}^{k-1}}{(P_{1}-P_{n+1})^{k}}%
\right) \\
p_{2} &=&\frac{1}{g_{n}}\left( -2+\dsum\limits_{k=1}^{n-2}\frac{%
P_{n-k-1}P_{n}^{k-1}}{(P_{1}-P_{n+1})^{k}}\right) \\
p_{i} &=&-\frac{P_{n}^{i-3}}{g_{n}(P_{1}-P_{n+1})^{i-2}},\ \ \ i=3,4,\ldots
,n
\end{eqnarray*}%
for $g_{n}=P_{1}-2P_{n}+\tsum\nolimits_{k=2}^{n-1}P_{k-1}\left( \frac{P_{n}}{%
P_{1}-P_{n+1}}\right) ^{n-k}.$
\end{theorem}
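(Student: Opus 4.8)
For $n=3$ one can find $p_1,p_2,p_3$ by directly solving the $3\times 3$ system $\mathbb{P}\,circ(p_1,p_2,p_3)=I$, so I would assume $n\ge 4$, where the factorization from the proof of Theorem~1 is available. The strategy is to reuse that factorization: there we produced matrices $M$ and $N$ with $\det M\,\det N=1$ for which $S=M\mathbb{P}N$ is block upper triangular,
\[
S=\begin{bmatrix} T & R \\ 0 & C \end{bmatrix},
\qquad
T=\begin{bmatrix} 1 & g_n^{\prime} \\ 0 & g_n \end{bmatrix},
\]
where $R$ is the $2\times(n-2)$ block whose two rows are $(P_{n-1},\ldots,P_2)$ and $(P_{n-2},\ldots,P_1)$, and $C$ is exactly the $(n-2)\times(n-2)$ bidiagonal matrix of Lemma~4. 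Since $M,N$ are invertible, $\mathbb{P}^{-1}=N S^{-1}M$, so the task reduces to inverting $S$ and carrying out two products with matrices we already understand.

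First I would invert the block triangular $S$ by the standard formula
\[
S^{-1}=\begin{bmatrix} T^{-1} & -T^{-1}RC^{-1} \\ 0 & C^{-1} \end{bmatrix},
\qquad
T^{-1}=\begin{bmatrix} 1 & -g_n^{\prime}/g_n \\ 0 & 1/g_n \end{bmatrix},
\]
with $C^{-1}$ supplied explicitly by Lemma~4. Because the inverse of a nonsingular circulant is again circulant, it is enough to recover a single column of $\mathbb{P}^{-1}$, and I would take the first, $\mathbb{P}^{-1}e_1=N S^{-1}(Me_1)$. The vector $Me_1=(1,-2,-1,0,\ldots,0)^{\top}$ is sparse, and applying $S^{-1}$ is painless: its lower block is $C^{-1}(-e_1)$, i.e. minus the first column of $C^{-1}$, which by Lemma~4 is the geometric vector $u$ with entries $u_i=-P_n^{\,i-1}/(P_1-P_{n+1})^{i}$, while the top two entries are $T^{-1}$ applied to $\big(1-\mathbf{r}_1u,\,-2-\mathbf{r}_2u\big)^{\top}$, where $\mathbf{r}_1,\mathbf{r}_2$ denote the rows of $R$.

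It then remains to apply $N$ and read off coordinates. Since the first column of $circ(p_1,\ldots,p_n)$ is $(p_1,p_n,p_{n-1},\ldots,p_2)^{\top}$, the first coordinate of $N S^{-1}(Me_1)$ yields $p_1$, the last yields $p_2$, and the intermediate ones yield $p_n,p_{n-1},\ldots,p_3$. The purely geometric tail contributed by $C^{-1}$ produces the clean expression $p_i=-P_n^{\,i-3}/\big(g_n(P_1-P_{n+1})^{\,i-2}\big)$ for $i\ge 3$, and $p_2$ comes directly from the $(2,2)$ entry $1/g_n$ of $T^{-1}$ once its sum is re-indexed.

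The only genuine work is the bookkeeping that matches the last two pieces to the stated closed forms, and I expect it to be the main obstacle. Applying $N$ injects a multiple of $p_2$ into every later coordinate, so a priori $p_i=\big(P_n/(P_1-P_{n+1})\big)^{i-2}p_2+w_i$ for $i\ge 3$, with $w_i=-P_n^{\,i-3}/(P_1-P_{n+1})^{\,i-2}$ the tail from $C^{-1}$; recovering the claimed coefficient $1/g_n$ reduces to the single identity $P_ng_np_2=g_n-1$, which is nothing but the definition of $g_n$ in Theorem~1 read backwards (reindex $k\mapsto n-k$ and use $P_1=1$). Likewise $p_1$ requires recognizing the scalar $P_n(1-\mathbf{r}_1u)$ as $g_n^{\prime}$ — again definitional after the same reindexing — and splitting off its top summand via $P_2=2$ to isolate the term $2P_n^{\,n-3}/(P_1-P_{n+1})^{\,n-2}$. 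None of these steps is analytically hard; the real risk is keeping the three index reversals (induced by $M$, by $C^{-1}$, and by $N$) mutually consistent so that the Pell sums align with $g_n$ and $g_n^{\prime}$ exactly, without sign or off-by-one slips.
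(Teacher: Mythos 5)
Your proposal is correct and takes essentially the same route as the paper: it reuses the $M\mathbb{P}N$ factorization from Theorem 1, inverts the bidiagonal block via Lemma 4, and recovers the whole circulant inverse from a single column. The paper's auxiliary matrix $U$ (giving $M\mathbb{P}NU=H\oplus C$) is just an explicit packaging of your block-triangular inversion formula (it reads off the last row rather than the first column), and the identities you isolate, $P_{n}g_{n}p_{2}=g_{n}-1$ and $P_{n}(1-\mathbf{r}_{1}u)=g_{n}^{\prime }$, are the same reindexings the paper carries out with its sums $S_{n}^{(r)}$.
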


\begin{proof}
Let%
\begin{equation*}
U=\left[ 
\begin{array}{cccccc}
1 & -g_{n}^{\prime } & \frac{g_{n}^{\prime }}{g_{n}}P_{n-2}-P_{n-1} & \frac{%
g_{n}^{\prime }}{g_{n}}P_{n-3}-P_{n-2} & \ldots & \frac{g_{n}^{\prime }}{%
g_{n}}P_{1}-P_{2} \\ 
0 & 1 & -\frac{P_{n-2}}{g_{n}} & -\frac{P_{n-3}}{g_{n}} & \ldots & -\frac{%
P_{1}}{g_{n}} \\ 
0 & 0 & 1 & 0 & \ldots & 0 \\ 
0 & 0 & 0 & 1 & \ldots & 0 \\ 
\vdots & \vdots & \vdots & \vdots & \ddots & \vdots \\ 
0 & 0 & 0 & 0 & \ldots & 0 \\ 
0 & 0 & 0 & 0 & \ldots & 1%
\end{array}%
\right]
\end{equation*}%
and $H=diag(1,g_{n})$ where $g_{n}^{\prime
}=P_{n}+\tsum\nolimits_{k=2}^{n-1}P_{k}\left( \frac{P_{n}}{1-P_{n+1}}\right)
^{n-k}$ and $g_{n}=P_{1}-2P_{n}+\tsum\nolimits_{k=2}^{n-1}P_{k-1}\left( 
\frac{P_{n}}{P_{1}-P_{n+1}}\right) ^{n-k}.$ Then we can write%
\begin{equation*}
M\mathbb{P}NU=H\oplus C
\end{equation*}%
where $H\oplus C$ is the direct sum of the matrices $H$ and $C$. Let $T=NU.$
Then we have%
\begin{equation*}
\mathbb{P}^{-1}=T(H^{-1}\oplus C^{-1})M.
\end{equation*}

Since the matrix $\mathbb{P}$ is circulant, its inverse is circulant from
Lemma 1.1 [1, p.9791]. Let%
\begin{equation*}
\mathbb{P}^{-1}=circ(p_{1},p_{2},\ldots ,p_{n}).
\end{equation*}%
Since the last row of the matrix $T$ is 
\begin{equation*}
\left( 0,1,-\frac{P_{n-2}}{g_{n}},-\frac{P_{n-3}}{g_{n}},-\frac{P_{n-4}}{%
g_{n}},\ldots ,-\frac{P_{2}}{g_{n}},-\frac{P_{1}}{g_{n}}\right) ,
\end{equation*}%
the last row components of the matrix $\mathbb{P}^{-1}$ are%
\begin{eqnarray*}
p_{2} &=&\frac{1}{g_{n}}\left( -2+\dsum\limits_{k=1}^{n-2}\frac{%
P_{n-k-1}P_{n}^{k-1}}{(P_{1}-P_{n+1})^{k}}\right) \\
p_{3} &=&-\frac{1}{g_{n}(P_{1}-P_{n+1})} \\
p_{4} &=&-\frac{P_{n}}{g_{n}(P_{1}-P_{n+1})^{2}} \\
p_{5} &=&-\frac{1}{g_{n}}\left( \dsum\limits_{k=1}^{3}\frac{%
P_{4-k}P_{n}^{k-1}}{(P_{1}-P_{n+1})^{k}}-2\dsum\limits_{k=1}^{2}\frac{%
P_{3-k}P_{n}^{k-1}}{(P_{1}-P_{n+1})^{k}}-\frac{P_{1}}{P_{1}-P_{n+1}}\right)
\\
&&\vdots \\
p_{n} &=&-\frac{1}{g_{n}}\left( \dsum\limits_{k=1}^{n-2}\frac{%
P_{n-k-1}P_{n}^{k-1}}{(P_{1}-P_{n+1})^{k}}-2\dsum\limits_{k=1}^{n-3}\frac{%
P_{n-k-2}P_{n}^{k-1}}{(P_{1}-P_{n+1})^{k}}-\dsum\limits_{k=1}^{n-4}\frac{%
P_{n-k-3}P_{n}^{k-1}}{(P_{1}-P_{n+1})^{k}}\right) \\
p_{1} &=&\frac{1}{g_{n}}\left( 1+\frac{2P_{n}^{n-3}}{(P_{1}-P_{n+1})^{n-2}}%
+\dsum\limits_{k=1}^{n-3}\frac{P_{n-k}P_{n}^{k-1}}{(P_{1}-P_{n+1})^{k}}%
\right) .
\end{eqnarray*}%
where $g_{n}=P_{1}-2P_{n}+\tsum\nolimits_{k=2}^{n-1}P_{k-1}\left( \frac{P_{n}%
}{P_{1}-P_{n+1}}\right) ^{n-k}.$ If $S_{n}^{(r)}=\tsum\nolimits_{k=1}^{r}%
\frac{P_{r-k+1}P_{n}^{k-1}}{(P_{1}-P_{n+1})^{k}}$ ($r=1,2,\ldots ,n-2$),
then we obtain%
\begin{equation*}
S_{n}^{(2)}-2S_{n}^{(1)}=\frac{P_{n}}{(P_{1}-P_{n+1})^{2}}
\end{equation*}%
and%
\begin{equation*}
S_{n}^{(r+2)}-2S_{n}^{(r+1)}-S_{n}^{(r)}=\frac{P_{n}^{r+1}}{%
(P_{1}-P_{n+1})^{r+2}},\ \ r=1,2,\ldots ,n-4.
\end{equation*}%
Hence, we have%
\begin{eqnarray*}
\mathbb{P}^{-1} &=&\frac{1}{g_{n}}circ\left(
1+2S_{n}^{(n-2)}+S_{n}^{(n-3)},-2+S_{n}^{(n-2)},-S_{n}^{(1)},-S_{n}^{(2)}+2S_{n}^{(1)},\right.
\\
&&\left. S_{n}^{(3)}-2S_{n}^{(2)}-S_{n}^{(1)},\ldots
,S_{n}^{(n-2)}-2S_{n}^{(n-3)}-S_{n}^{(n-4)}\right) \\
&=&\frac{1}{g_{n}}circ\left( 1+\frac{2P_{n}^{n-3}}{(P_{1}-P_{n+1})^{n-2}}%
+\dsum\limits_{k=1}^{n-3}\frac{P_{n-k}P_{n}^{k-1}}{(P_{1}-P_{n+1})^{k}}%
,-2+\dsum\limits_{k=1}^{n-2}\frac{P_{n-k-1}P_{n}^{k-1}}{(P_{1}-P_{n+1})^{k}}%
,\right. \\
&&\left. -\frac{1}{(P_{1}-P_{n+1})},-\frac{P_{n}}{(P_{1}-P_{n+1})^{2}},-%
\frac{P_{n}^{2}}{(P_{1}-P_{n+1})^{3}},\ldots ,-\frac{P_{n}^{n-3}}{%
(P_{1}-P_{n+1})^{n-2}}\right) .
\end{eqnarray*}%
by the nice properties of Pell numbers.
\end{proof}

{A Hankel matrix $A=(a_{ij})$ is an $n\times n$ matrix such that $%
a_{i,j}=a_{i-1,j+1}$. It is closely related to the Toeplitz matrix in the
sense that a Hankel matrix is an upside-down Toeplitz matrix.}

\begin{corollary}
\bigskip Let the matrix $M$ be as in (\ref{5}). Then its inverse is%
\begin{equation*}
M^{-1}=\left[ 
\begin{array}{rr}
1 & 0 \\ 
B & C%
\end{array}%
\right]
\end{equation*}%
where the matrices 
\begin{equation*}
B=\left( 
\begin{array}{rrrrr}
P_{n} & P_{n-1} & P_{n-2} & \ldots & P_{2}%
\end{array}%
\right) _{(n-1)\times 1}^{T}
\end{equation*}%
and $C$ is an $(n-1)\times (n-1)$ Hankel matrix that it has the first row $%
\left[ P_{n-1},P_{n-2},\ldots ,P_{1}\right] $ and the last column $\left[
P_{1},0,\ldots ,0\right] ^{T}.$\bigskip
\end{corollary}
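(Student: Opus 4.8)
The plan is to name the asserted block matrix $G$ and to verify directly that $MG=I_{n}$; since $M$ is square this already identifies $G=M^{-1}$. Invertibility of $M$ is not at issue, because the proof of Theorem 1 shows $\det(M)=\pm 1$, so $M^{-1}$ exists and checking a single product suffices. I would check $MG$ rather than $GM$, since the recurrence pattern $1,-2,-1$ sits in the rows of $M$ while $G$ stores consecutive Pell numbers along its anti-diagonals, so $MG$ is the product that telescopes cleanly.

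First I would record all entries of $G=(g_{ij})$ in one formula. The block form gives $g_{1,1}=1$ and $g_{1,j}=0$ for $j\geq 2$; the column $B$ gives $g_{i,1}=P_{n-i+2}$ for $2\leq i\leq n$; and the Hankel block $C$ gives $g_{i,j}=P_{n+3-i-j}$ whenever $i+j\leq n+2$ and $g_{i,j}=0$ otherwise, for $i,j\geq 2$. The only algebraic input is the Pell recurrence written as $P_{m+1}-2P_{m}-P_{m-1}=0$, together with $P_{0}=0$ and $P_{1}=1$.

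The verification then splits on the row index $i$ of $M$. Row $1$ is $e_{1}^{T}$ and immediately reproduces the first row of $I_{n}$. Rows $2$ and $3$ are boundary rows: their entries $-2$ and $-1$ in the first column pair against the bottom entries $P_{2},P_{3}$ of $B$, and the needed cancellations $-2+P_{2}=0$ and $-1+P_{3}-2P_{2}=0$ are instances of the recurrence valid for every $n$. For a generic row $4\leq i\leq n$ the three nonzero entries $1,-2,-1$ occupy columns $n-i+2,n-i+3,n-i+4$, so the $(i,j)$ product equals $g_{n-i+2,j}-2g_{n-i+3,j}-g_{n-i+4,j}$, that is, three consecutive entries down a fixed column of $G$. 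While all three lie inside the band they are $P_{i+1-j},P_{i-j},P_{i-1-j}$, whose combination vanishes by the recurrence, producing the off-diagonal zeros; the column $j=1$ collapses the same way via $P_{i}-2P_{i-1}-P_{i-2}=0$. As $j$ rises to the diagonal the band is truncated: at $j=i-1$ the last term drops to $P_{0}=0$ leaving $P_{2}-2P_{1}=0$, at $j=i$ only the first term survives and equals $P_{1}=1$, and for $j>i$ all three entries are $0$. (For $n=3$ there are no generic rows and the three boundary rows already give the result.)

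The main obstacle is the edge bookkeeping rather than any substantial step: one must keep the genuine band entries distinct from zeros that merely resemble the analytic continuation of the Pell sequence. Indeed, continuing the interior telescoping blindly past the band to $P_{-1}$ would wrongly return $0$ on the diagonal; it is exactly the convention $g_{i,j}=0$ for $i+j>n+2$, reading $P_{0}=0$ and stopping, that supplies the diagonal ones of $I_{n}$. Making the three boundary rows, the entries $-2,-1$ wrapped into the first column, and the column $j=1$ all close up correctly, each reducing to the single recurrence $P_{m+1}=2P_{m}+P_{m-1}$ with $P_{0},P_{1}$, is the only place that demands care.
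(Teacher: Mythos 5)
Your proposal is correct, and it is completable exactly as sketched: the uniform entry formula $g_{i,j}=P_{n+3-i-j}$ for $i+j\le n+2$ (zero otherwise, with row $1$ equal to $e_{1}^{T}$) does encode both $B$ and the Hankel block $C$; the generic rows of $M$ pick out three consecutive entries of a fixed column of $G$ and telescope by $P_{m+1}-2P_{m}-P_{m-1}=0$; and the boundary identities $-2+P_{2}=0$ and $-1+P_{3}-2P_{2}=0$, together with $g_{n,2}=P_{1}=1$ and $g_{n-1,3}=P_{1}=1$ supplying the diagonal entries of rows $2$ and $3$, finish the check. This is, however, not the paper's route: the paper's entire proof is the single sentence that $M^{-1}$ is obtained by applying elementary row operations to the augmented matrix $[M:I_{n}]$, i.e.\ a forward Gauss--Jordan derivation with no computation recorded. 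Your argument goes in the opposite direction --- accept the stated candidate $G$ and verify one product $MG=I_{n}$ --- which is legitimate since $\det M=\pm 1$ (indeed, for square matrices $MG=I$ alone already forces $G=M^{-1}$), and it buys something the paper's one-liner does not: it isolates exactly where the Pell recurrence and the initial values $P_{0}=0$, $P_{1}=1$ enter, and it flags the one genuine subtlety, namely that the band must be truncated at $P_{0}$ rather than analytically continued to $P_{-1}$, which is precisely what produces the ones on the diagonal.
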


\begin{proof}
The matrix $M^{-1}$ is obtained easily by applying elementary row\linebreak
operations to the augmented matrix $[M:I_{n}].$
\end{proof}

\begin{theorem}
\bigskip The matrix $\mathbb{Q}=circ(Q_{1},Q_{2},\ldots ,Q_{n})$ is
invertible when $n\geq 3$.
\end{theorem}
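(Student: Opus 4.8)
The plan is to mirror the proof of Theorem 3, substituting the Binet formula (\ref{3}) for Pell--Lucas numbers in place of (\ref{2}) for Pell numbers. First I would dispose of the small cases: by Theorem 2 we have $\det(\mathbb{Q})=2464\neq 0$ for $n=3$, and a direct evaluation of (\ref{6}) (or of the $4\times 4$ determinant) settles $n=4$, so $\mathbb{Q}$ is invertible there. For $n\geq 5$ I would pass to the eigenvalues of the circulant,
\[
v(\omega^{k})=\sum_{r=1}^{n}Q_{r}\,\omega^{kr-k},\qquad k=0,1,\ldots,n-1,\quad \omega=\exp\!\left(\tfrac{2\pi i}{n}\right),
\]
and show that none of them vanishes.

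Next I would insert $Q_{r}=\alpha^{r}+\beta^{r}$ and evaluate the two resulting geometric sums, using $\alpha+\beta=2$, $\alpha\beta=-1$ and $\omega^{kn}=1$. The only structural difference from the Pell computation is the plus sign between the two series. Summing yields
\[
v(\omega^{k})=\frac{\alpha(1-\alpha^{n})}{1-\alpha\omega^{k}}+\frac{\beta(1-\beta^{n})}{1-\beta\omega^{k}}=\frac{(2-Q_{n+1})+(2-Q_{n})\omega^{k}}{1-2\omega^{k}-\omega^{2k}},
\]
the first equality being valid because $1/\alpha$ and $1/\beta$ are real and are not roots of unity, so $1-\alpha\omega^{k}$ and $1-\beta\omega^{k}$ never vanish. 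One also checks that the denominator $1-2\omega^{k}-\omega^{2k}$ is nonzero at every root of unity, since its roots $-1\pm\sqrt{2}$ are not roots of unity. These two remarks are exactly the point that makes the implication ``$v(\omega^{k})=0\Rightarrow$ numerator $=0$'' legitimate, and I regard them as the main (though routine) obstacle.

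Then, exactly as in Theorem 3, a vanishing eigenvalue $v(\omega^{k})=0$ with $1\le k\le n-1$ would force $(2-Q_{n+1})+(2-Q_{n})\omega^{k}=0$, whence $\omega^{k}=\frac{Q_{n+1}-2}{2-Q_{n}}$, a real number. Writing $\omega^{k}=\cos\!\left(\frac{2k\pi}{n}\right)+i\sin\!\left(\frac{2k\pi}{n}\right)$ as in (\ref{8}), reality forces $\sin\!\left(\frac{2k\pi}{n}\right)=0$, and since $0<\frac{2k\pi}{n}<2\pi$ this can only give $\omega^{k}=-1$. But substituting $x=-1$ into $(2-Q_{n+1})+(2-Q_{n})x=0$ produces $Q_{n}-Q_{n+1}=0$, which is impossible because $Q_{n+1}=2Q_{n}+Q_{n-1}>Q_{n}$ for $n\geq 1$. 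This contradiction shows $v(\omega^{k})\neq 0$ for $k=1,\ldots,n-1$, while the remaining eigenvalue $v(1)=\sum_{r=1}^{n}Q_{r}>0$ is patently nonzero. Hence every eigenvalue of $\mathbb{Q}$ is nonzero, and $\mathbb{Q}$ is invertible by [1, Lemma 1.1]. The inequality $Q_{n+1}>Q_{n}$ used at the last step, and the nonvanishing of numerator and denominator at roots of unity, are the only facts that need careful verification; the rest is a direct transcription of the Pell argument.
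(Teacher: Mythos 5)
Your proposal follows essentially the same route as the paper's own proof: dispose of $n=3,4$ via the determinant formula, then show for $n\geq 5$ that no eigenvalue $v(\omega^{k})=\frac{(2-Q_{n+1})+(2-Q_{n})\omega^{k}}{1-2\omega^{k}-\omega^{2k}}$ vanishes, since a zero would force $\omega^{k}=-1$ and hence $Q_{n}=Q_{n+1}$, which is impossible. Your added verifications (that $1-\alpha\omega^{k}$, $1-\beta\omega^{k}$ and the denominator never vanish at roots of unity, and that $v(1)>0$) are details the paper leaves implicit, so the argument is correct and matches the paper's.
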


\begin{proof}
We show that $\det (\mathbb{Q})=2464\neq 0$ and $\det (\mathbb{Q}%
)=-1247232\neq 0$ by Theorem 2 for $n=3,4$. Then the matrix $\mathbb{Q}$ is
the invertible matrix for $n=3,4$. Let $n\geq 5.$ The Binet formula for
Jacobsthal-Lucas numbers yields $Q_{n}=\alpha ^{n}+\beta ^{n},$ where $%
\alpha +\beta =2$ and $\alpha \beta =-1.$ Then we have%
\begin{eqnarray*}
v(\omega ^{k}) &=&\dsum\limits_{r=1}^{n}Q_{r}\omega
^{kr-k}=\dsum\limits_{r=1}^{n}\left( \alpha ^{r}+\beta ^{r}\right) \omega
^{kr-k} \\
&=&\frac{\alpha (1-\alpha ^{n})}{1-\alpha \omega ^{k}}+\frac{\beta (1-\beta
^{n})}{1-\beta \omega ^{k}},\ \ (1-\alpha \omega ^{k},1-\beta \omega
^{k}\neq 0) \\
&=&\left( \frac{(\alpha +\beta )-(\alpha ^{n+1}+\beta ^{n+1})+\alpha \beta
\omega ^{k}(\alpha ^{n}+\beta ^{n})-2\alpha \beta \omega ^{k}}{1-(\alpha
+\beta )\omega ^{k}+\alpha \beta \omega ^{2k}}\right) \\
&=&\frac{2-Q_{n+1}+(2-Q_{n})\omega ^{k}}{1-2\omega ^{k}-\omega ^{2k}},\ \
k=1,2,\ldots ,n-1.
\end{eqnarray*}%
We are going to show that there is no $\omega ^{k},$ $k=1,2,\ldots ,n-1$
such that $v(\omega ^{k})=0.$ If $2-Q_{n+1}+(2-Q_{n})\omega ^{k}=0$ for $%
1-2\omega ^{k}-\omega ^{2k}\neq 0,$ then $\omega ^{k}=\frac{Q_{n+1}-2}{%
2-Q_{n}}$ would be a real number. By (\ref{8}) we would have $\sin \left( 
\frac{2k\pi }{n}\right) =0$ so that $\omega ^{k}=-1$ for $0<\frac{2k\pi }{n}%
<2\pi .$ However $x=-1$ is not a root of the equation $%
1-Q_{n+1}+(2-Q_{n})x=0 $ $(n\geq 5),$ a contradiction. i.e. $v(\omega
^{k})\neq 0$ for any $\omega ^{k},$ where $k=1,2,\ldots ,n-1\ $and $n\geq 5.$
Thus, the proof is completed by [1, Lemma 1.1].
\end{proof}

\begin{lemma}
If the matrix $A=\bigskip (a_{ij})_{i,j=1}^{n-2}$ is of the form%
\begin{equation*}
a_{ij}=\left\{ 
\begin{array}{l}
Q_{1}-Q_{n+1},\ i=j \\ 
2-Q_{n},\ \ \ \ \ \ i=j+1 \\ 
0,\ \ \ \ \ \ \ \ \ \ otherwise,%
\end{array}%
\right.
\end{equation*}%
then $A^{-1}=(a_{ij}^{^{\prime }})_{i,j=1}^{n-2}$ is given by%
\begin{equation*}
a_{ij}^{\prime }=\left\{ 
\begin{array}{l}
\frac{(Q_{n}-2)^{i-j}}{(Q_{1}-Q_{n+1})^{i-j+1}},\ i\geq j \\ 
0,\ \ \ \ \ \ \ \ \ \ \ \ \ otherwise.%
\end{array}%
\right.
\end{equation*}
\end{lemma}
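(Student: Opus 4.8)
The plan is to proceed exactly as in the proof of the preceding Pell-number lemma: set $B:=(b_{ij})=AA^{-1}$ and verify directly that $B=I_{n-2}$ by computing each entry $b_{ij}=\sum_{k=1}^{n-2}a_{ik}a_{kj}'$, exploiting that $A$ is lower bidiagonal while the claimed inverse $A^{-1}$ is lower triangular. Because row $i$ of $A$ has at most the two nonzero entries $a_{i,i-1}=2-Q_n$ and $a_{ii}=Q_1-Q_{n+1}$, every such sum collapses to at most two terms, and the whole argument is a short case check on the sign of $i-j$.

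First I would dispose of the trivial cases. On the diagonal the only surviving term is $k=i$, so $b_{ii}=a_{ii}a_{ii}'=(Q_1-Q_{n+1})\cdot\frac{1}{Q_1-Q_{n+1}}=1$. For $i<j$ there is no index $k$ with both $k\le i$ (so that $a_{ik}\neq 0$) and $k\ge j$ (so that $a_{kj}'\neq 0$); equivalently, the product of two lower triangular matrices is lower triangular, hence $b_{ij}=0$ above the diagonal.

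The substantive case is $i>j$, where the sum reduces to $b_{ij}=a_{i,i-1}a_{i-1,j}'+a_{ii}a_{ij}'$. Substituting the bidiagonal entries of $A$ and the claimed formula for $A^{-1}$ gives $(2-Q_n)\frac{(Q_n-2)^{i-1-j}}{(Q_1-Q_{n+1})^{i-j}}+(Q_1-Q_{n+1})\frac{(Q_n-2)^{i-j}}{(Q_1-Q_{n+1})^{i-j+1}}$. Rewriting $2-Q_n=-(Q_n-2)$ turns the first summand into $-\frac{(Q_n-2)^{i-j}}{(Q_1-Q_{n+1})^{i-j}}$, which exactly cancels the second summand $\frac{(Q_n-2)^{i-j}}{(Q_1-Q_{n+1})^{i-j}}$; thus $b_{ij}=0$. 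Collecting the three cases yields $AA^{-1}=I_{n-2}$.

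The one place to be careful — and essentially the only nontrivial point — is the sign identity $2-Q_n=-(Q_n-2)$ relating the subdiagonal entry of $A$ to the base $Q_n-2$ appearing in $A^{-1}$; a misplaced sign here is what would break the cancellation in the $i>j$ case. Beyond that the proof is bookkeeping identical to the Pell lemma, so I expect no real obstacle; if desired one can also confirm $A^{-1}A=I_{n-2}$ by the symmetric computation, or simply invoke that a one-sided inverse of a square matrix is a two-sided inverse.
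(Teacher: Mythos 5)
Your proposal is correct and follows the paper's proof essentially verbatim: the same entrywise computation of $AA^{-1}$, with $b_{ii}=1$ on the diagonal, the two-term cancellation $(2-Q_n)\frac{(Q_n-2)^{i-j-1}}{(Q_1-Q_{n+1})^{i-j}}+(Q_1-Q_{n+1})\frac{(Q_n-2)^{i-j}}{(Q_1-Q_{n+1})^{i-j+1}}=0$ for $i>j$, and triangularity handling the case $i<j$. Your added remarks (the explicit lower-triangularity argument for $i<j$ and the one-sided-inverse observation) only make explicit what the paper leaves as ``similar for $i<j$.''
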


\begin{proof}
Let $B:=(b_{ij})=AA^{-1}$ so that $b_{ij}=\tsum%
\nolimits_{k=1}^{n-2}a_{ik}a_{kj}^{^{\prime }}.$ Clearly%
\begin{equation*}
b_{ii}=(Q_{1}-Q_{n+1}).\frac{1}{Q_{1}-Q_{n+1}}=1.
\end{equation*}%
If $i>j,$ then%
\begin{eqnarray*}
b_{ij} &=&\tsum\nolimits_{k=1}^{n-2}a_{ik}a_{kj}^{^{\prime
}}=a_{i,i-1}a_{i-1,j}^{^{\prime }}+a_{ii}a_{ij}^{^{\prime }} \\
&=&(2-Q_{n})\frac{(Q_{n}-2)^{i-j-1}}{(Q_{1}-Q_{n+1})^{i-j}}+(Q_{1}-Q_{n+1})%
\frac{(Q_{n}-2)^{i-j}}{(Q_{1}-Q_{n+1})^{i-j+1}}=0;
\end{eqnarray*}%
similar for $i<j.$ Thus $A^{-1}A=I_{n-2}.$
\end{proof}

\begin{theorem}
\bigskip Let $n\geq 3.$ The inverse of the matrix $\mathbb{Q}$ is%
\begin{equation*}
\mathbb{Q}^{-1}=circ(q_{1},q_{2},\ldots ,q_{n})
\end{equation*}%
where%
\begin{eqnarray*}
q_{1} &=&\frac{1}{u_{n}}\left( 1-\frac{8(Q_{n}-2)^{n-3}}{%
(Q_{1}-Q_{n+1})^{n-2}}+\dsum\limits_{k=1}^{n-3}\frac{%
(Q_{n-k+2}-3Q_{n-k+1})(Q_{n}-2)^{k-1}}{(Q_{1}-Q_{n+1})^{k}}\right) \\
q_{2} &=&\frac{1}{u_{n}}\left(
-3+\dsum\limits_{k=1}^{n-2}(Q_{n-k+1}-3Q_{n-k})\left( \frac{(Q_{n}-2)^{k-1}}{%
(Q_{1}-Q_{n+1})^{k}}\right) \right) \\
q_{m} &=&\frac{4(Q_{n}-2)^{m-3}}{u_{n}(Q_{1}-Q_{n+1})^{m-2}},\ \ \
m=3,4,\ldots ,n
\end{eqnarray*}%
for $u_{n}=Q_{1}-3Q_{n}+\dsum\limits_{k=2}^{n-1}\left( Q_{k+1}-3Q_{k}\right)
\left( \frac{Q_{n}-2}{Q_{1}-Q_{n+1}}\right) ^{n-k}.$
\end{theorem}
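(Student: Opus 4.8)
The plan is to mirror the proof of Theorem 5 step for step, replacing the factorization $M\mathbb{P}N$ with the factorization $K\mathbb{Q}L$ produced in the proof of Theorem 2. Invertibility for $n\geq 3$ is already supplied by Theorem 6, and the two small cases $n=3,4$ (where $\det(\mathbb{Q})=2464$ and $-1247232$) can be matched against the claimed formulas by direct substitution, so I would carry out the general argument for $n\geq 5$. Recall from the proof of Theorem 2 that $K\mathbb{Q}L$ is Hessenberg: its leading $2\times 2$ corner carries the data $Q_1$ and $u_n$, its first two rows are full, and its trailing $(n-2)\times(n-2)$ block is exactly the bidiagonal matrix $A$ of Lemma 7, whose explicit inverse $A^{-1}$ is lower-triangular with entries $(Q_n-2)^{i-j}/(Q_1-Q_{n+1})^{i-j+1}$.

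First I would build an upper-triangular clean-up matrix $V$ --- the $\mathbb{Q}$-analogue of the matrix $U$ in the proof of Theorem 5 --- whose first two columns are chosen to clear the two full rows sitting above the bidiagonal block, so that
\begin{equation*}
K\mathbb{Q}LV=\mathrm{diag}(Q_1,u_n)\oplus A.
\end{equation*}
Writing $T=LV$, this gives $\mathbb{Q}^{-1}=T\bigl(\mathrm{diag}(Q_1,u_n)^{-1}\oplus A^{-1}\bigr)K$. Because the inverse of a nonsingular circulant is again circulant, $\mathbb{Q}^{-1}=circ(q_1,\ldots,q_n)$ is determined by any single row; as in Theorem 5 I would read off the last row of $T$ from the explicit last row of $L$ together with the last column of $V$, then multiply it on the right by the block-diagonal inverse and by $K$.

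The explicit form of $K^{-1}$ is needed for that final multiplication; I would obtain it exactly as Corollary 6 obtains $M^{-1}$, by elementary row reduction of $[K:I_n]$, which produces a block form with a Hankel tail built from the Pell-Lucas numbers (the $Q$-analogue of the matrix $C$ in Corollary 6). Carrying out the product yields each $q_m$ as a combination of partial sums of the shape $\sum_k (Q_{\bullet}-3Q_{\bullet})(Q_n-2)^{k-1}/(Q_1-Q_{n+1})^k$, and the closed forms for $q_3,\ldots,q_n$ together with the boundary entries $q_1,q_2$ should then fall out after telescoping.

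The main obstacle will be precisely this telescoping. Setting $T_n^{(r)}=\sum_{k=1}^{r}(Q_{r-k+2}-3Q_{r-k+1})(Q_n-2)^{k-1}/(Q_1-Q_{n+1})^k$, the combination $T_n^{(r+2)}-2T_n^{(r+1)}-T_n^{(r)}$ kills the interior terms via the Pell-Lucas recursion $Q_{j+2}=2Q_{j+1}+Q_j$, just as the $S_n^{(r)}$ identities do in Theorem 5. The crucial difference is at the boundary: here $Q_2-3Q_1=-4$ and $Q_3-3Q_2=0$, so the boundary contributions do not cancel the way $P_2-2P_1=0$ makes them cancel in the Pell case. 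These surviving terms are exactly what force the coefficient $4=3Q_1-Q_2$ in every $q_m$ for $m\geq 3$ and the $-8=2(Q_2-3Q_1)$ correction in $q_1$, and they must be combined correctly with the extra entries coming from the modified first column $(1,-3,-1,0,\ldots)$ of $K$. Keeping track of all of these constant offsets --- which arise from the Pell-Lucas initial values $Q_0=Q_1=2$ rather than from $P_0=0,\,P_1=1$ --- is the one genuinely new and delicate part of the argument; everything else is formally identical to the Pell case.
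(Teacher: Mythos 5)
Your proposal follows the paper's own proof essentially step for step: the clean-up matrix you call $V$ is the paper's $\mathbb{S}$, the factorization $K\mathbb{Q}LV=\mathrm{diag}(Q_{1},u_{n})\oplus A$ with $W=LV$ and $\mathbb{Q}^{-1}=W(\mathrm{diag}(Q_{1},u_{n})^{-1}\oplus A^{-1})K$ is exactly the paper's (the paper writes $G=\mathrm{diag}(2,u_{n})$, and $Q_{1}=2$), and reading off one row of the circulant inverse and telescoping the partial sums via $Q_{j+2}=2Q_{j+1}+Q_{j}$ is precisely how the paper extracts the closed forms. Two corrections to the part you rightly single out as delicate: the boundary values are $Q_{2}-3Q_{1}=0$ and $Q_{3}-3Q_{2}=-4$, i.e.\ you have them interchanged, so the surviving coefficient is $4=3Q_{2}-Q_{3}$ (not $3Q_{1}-Q_{2}$, which vanishes) and the correction in $q_{1}$ is $-8=2(Q_{3}-3Q_{2})$; and the explicit inverse $K^{-1}$ is not an ingredient here, since your own displayed formula multiplies the last row of $W(\mathrm{diag}(Q_{1},u_{n})^{-1}\oplus A^{-1})$ on the right by $K$ itself --- the block/Hankel form of $K^{-1}$ is the content of the separate corollary that follows the theorem, not a step of its proof.
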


\begin{proof}
Let%
\begin{eqnarray*}
\mathbb{S} &=&\left[ 
\begin{array}{cccc}
1 & -\frac{1}{2}u_{n}^{\prime } & \frac{u_{n}^{\prime }(Q_{n}-3Q_{n-1})}{%
2u_{n}}-\frac{Q_{n-1}}{2} & \frac{u_{n}^{\prime }(Q_{n-1}-3Q_{n-2})}{2u_{n}}-%
\frac{Q_{n-2}}{2} \\ 
0 & 1 & -\frac{Q_{n}-3Q_{n-1}}{u_{n}} & -\frac{Q_{n-}-3Q_{n-2}}{u_{n}} \\ 
0 & 0 & 1 & 0 \\ 
0 & 0 & 0 & 1 \\ 
\vdots & \vdots & \vdots & \vdots \\ 
0 & 0 & 0 & 0 \\ 
0 & 0 & 0 & 0%
\end{array}%
\right. \\
&&\left. ~\ \ \ \ \ \ \ \ \ \ \ \ \ \ \ \ \ \ \ \ \ \ \ \ \ \ \ \ \ \ \ \ \
\ \ \ \ \ \ \ \ 
\begin{array}{cc}
\ldots & \frac{u_{n}^{\prime }(Q_{3}-3Q_{2})}{2u_{n}}-\frac{Q_{2}}{2} \\ 
\ldots & -\frac{Q_{3}-3Q_{2}}{u_{n}} \\ 
\ldots & 0 \\ 
\ldots & 0 \\ 
\ddots & \vdots \\ 
\ldots & 0 \\ 
\ldots & 1%
\end{array}%
\right]
\end{eqnarray*}%
and $G=diag(2,u_{n})$ where $u_{n}=Q_{1}-3Q_{n}+\dsum\limits_{k=2}^{n-1}%
\left( Q_{k+1}-3Q_{k}\right) \left( \frac{Q_{n}-2}{Q_{1}-Q_{n+1}}\right)
^{n-k}$ and $u_{n}^{\prime }=\dsum\limits_{k=2}^{n}Q_{k}\left( \frac{Q_{n}-2%
}{Q_{1}-Q_{n+1}}\right) ^{n-k}.$ Then we obtain%
\begin{equation*}
K\mathbb{Q}L\mathbb{S}=G\oplus A
\end{equation*}%
where $G\oplus A$ is the direct sum of the matrices $G$ and $A$. If $W=L%
\mathbb{S},$ then we have%
\begin{equation*}
\mathbb{Q}^{-1}=W(G^{-1}\oplus A^{-1})K.
\end{equation*}

Since the matrix $\mathbb{Q}$ is circulant, the inverse matrix $\mathbb{Q}%
^{-1}$\ is circulant from Lemma 1.1 [1, p. 9791]. Let%
\begin{equation*}
\mathbb{Q}^{-1}=circ(q_{1},q_{2},\ldots ,q_{n}).
\end{equation*}%
Since the last row of the matrix $W$ is 
\begin{equation*}
\left( 0,1,-\frac{Q_{n}-3Q_{n-1}}{u_{n}},-\frac{Q_{n-1}-3Q_{n-2}}{u_{n}},-%
\frac{Q_{n-2}-3Q_{n-3}}{u_{n}},\ldots ,-\frac{Q_{4}-3Q_{3}}{u_{n}},-\frac{%
Q_{3}-3Q_{2}}{u_{n}}\right) ,
\end{equation*}%
the last row elements of the matrix $\mathbb{Q}^{-1}$ are%
\begin{eqnarray*}
q_{2} &=&\frac{1}{u_{n}}\left(
-3+\dsum\limits_{k=1}^{n-2}(Q_{n-k+1}-3Q_{n-k})\left( \frac{(Q_{n}-2)^{k-1}}{%
(Q_{1}-Q_{n+1})^{k}}\right) \right) \\
q_{3} &=&-\frac{Q_{3}-3Q_{2}}{u_{n}(Q_{1}-Q_{n+1})} \\
q_{4} &=&\frac{2(Q_{3}-3Q_{2})}{u_{n}(Q_{1}-Q_{n+1})}-\frac{1}{u_{n}}%
\dsum\limits_{k=1}^{2}\frac{(Q_{5-k}-3Q_{4-k})(Q_{n}-2)^{k-1}}{%
(Q_{1}-Q_{n+1})^{k}} \\
q_{5} &=&\frac{1}{u_{n}}\left( \frac{Q_{3}-3Q_{2}}{(Q_{1}-Q_{n+1})}%
-\dsum\limits_{k=1}^{3}\frac{(Q_{6-k}-3Q_{5-k})(Q_{n}-2)^{k-1}}{%
(Q_{1}-Q_{n+1})^{k}}\right. \\
&&\left. +2\dsum\limits_{k=1}^{2}\frac{(Q_{5-k}-3Q_{4-k})(Q_{n}-2)^{k-1}}{%
(Q_{1}-Q_{n+1})^{k}}\right) \\
&&\vdots \\
q_{n} &=&-\frac{1}{u_{n}}\left( \dsum\limits_{k=1}^{n-2}\frac{%
(Q_{n-k+1}-3Q_{n-k})(Q_{n}-2)^{k-1}}{(Q_{1}-Q_{n+1})^{k}}\right. \\
&&-2\dsum\limits_{k=1}^{n-3}\frac{(Q_{n-k}-3Q_{n-k-1})(Q_{n}-2)^{k-1}}{%
(Q_{1}-Q_{n+1})^{k}} \\
&&\left. -\dsum\limits_{k=1}^{n-4}\frac{(Q_{n-k-1}-3Q_{n-k-2})(Q_{n}-2)^{k-1}%
}{(Q_{1}-Q_{n+1})^{k}}\right) \\
q_{1} &=&\frac{1}{u_{n}}\left( 1-\frac{8(Q_{n}-2)^{n-3}}{%
(Q_{1}-Q_{n+1})^{n-2}}+\dsum\limits_{k=1}^{n-3}\frac{%
(Q_{n-k+2}-3Q_{n-k+1})(Q_{n}-2)^{k-1}}{(Q_{1}-Q_{n+1})^{k}}\right)
\end{eqnarray*}%
where $u_{n}=Q_{1}-3Q_{n}+\dsum\limits_{k=2}^{n-1}\left(
Q_{k+1}-3Q_{k}\right) \left( \frac{Q_{n}-2}{Q_{1}-Q_{n+1}}\right) ^{n-k}.$
Now, let us rearrange the $q_{i}$'s$\ (i\geq 4).$ Hence we obtain%
\begin{eqnarray*}
q_{4} &=&\frac{1}{u_{n}}\left( \frac{2Q_{3}-6Q_{2}-Q_{4}+3Q_{3}}{%
Q_{1}-Q_{n+1}}-\frac{(Q_{3}-3Q_{2})(Q_{n}-2)}{(Q_{1}-Q_{n+1})^{2}}\right) \\
&=&\frac{1}{u_{n}}\left( \frac{\overset{0}{\overbrace{-Q_{4}+2Q_{3}+Q_{2}}}+%
\overset{0}{\overbrace{3Q_{1}-Q_{2}}}}{Q_{1}-Q_{n+1}}-\frac{-4(Q_{n}-2)}{%
(Q_{1}-Q_{n+1})^{2}}\right) \\
&=&\frac{4(Q_{n}-2)}{u_{n}(Q_{1}-Q_{n+1})^{2}}
\end{eqnarray*}%
\begin{eqnarray*}
q_{5} &=&\frac{1}{u_{n}}\left[ \frac{1}{Q_{1}-Q_{n+1}}(\underset{0}{%
\underbrace{(-Q_{5}+2Q_{4}+Q_{3})}}+3(\underset{0}{\underbrace{%
Q_{4}-2Q_{3}-Q_{2}}}))\right. \\
&&\left. +\frac{Q_{n}-2}{(Q_{1}-Q_{n+1})^{2}}((\underset{0}{\underbrace{%
-Q_{4}+2Q_{3}+Q_{2}}})+\underset{0}{\underbrace{3Q_{1}-Q_{2}}})+\frac{%
4(Q_{n}-2)^{2}}{(Q_{1}-Q_{n+1})^{3}}\right] \\
&=&\frac{4(Q_{n}-2)^{2}}{(Q_{1}-Q_{n+1})^{3}}.
\end{eqnarray*}%
If we formulate general case $i\geq 3,$ then we have%
\begin{equation*}
q_{i}=\frac{4(Q_{n}-2)^{n-i}}{(Q_{1}-Q_{n+1})^{n-i}}.
\end{equation*}%
Thus%
\begin{eqnarray*}
\mathbb{Q}^{-1} &=&\frac{1}{u_{n}}circ\left( 1-\frac{8(Q_{n}-2)^{n-3}}{%
(Q_{1}-Q_{n+1})^{n-2}}+\dsum\limits_{k=1}^{n-3}\frac{%
(Q_{n-k+2}-3Q_{n-k+1})(Q_{n}-2)^{k-1}}{(Q_{1}-Q_{n+1})^{k}},\right. \\
&&-3+\dsum\limits_{k=1}^{n-2}(Q_{n-k+1}-3Q_{n-k})\left( \frac{(Q_{n}-2)^{k-1}%
}{(Q_{1}-Q_{n+1})^{k}}\right) ,\frac{4}{Q_{1}-Q_{n+1}}, \\
&&\left. \frac{4(Q_{n}-2)}{(Q_{1}-Q_{n+1})^{2}},\frac{4(Q_{n}-2)^{2}}{%
(Q_{1}-Q_{n+1})^{3}},\ldots ,\frac{4(Q_{n}-2)^{n-3}}{(Q_{1}-Q_{n+1})^{n-2}}%
\right)
\end{eqnarray*}%
by the nice properties fo Pell-Lucas numbers.
\end{proof}

\begin{corollary}
\bigskip Let the matrix $K$ be as in (\ref{7}). Then its inverse is block
matrix%
\begin{equation*}
K^{-1}=\left[ 
\begin{array}{rr}
1 & 0 \\ 
D & E%
\end{array}%
\right] .
\end{equation*}%
where the matrices 
\begin{equation*}
D=\left( 
\begin{array}{rrrrr}
\frac{Q_{n}}{2} & \frac{Q_{n-1}}{2} & \frac{Q_{n-2}}{2} & \ldots & \frac{%
Q_{2}}{2}%
\end{array}%
\right) _{(n-1)\times 1}^{T}
\end{equation*}%
and $E$ is an $(n-1)\times (n-1)$ Hankel matrix that it has the first row $%
\left[ P_{n-1},P_{n-2},\ldots ,P_{1}\right] $ and the last column $\left[
P_{1},0,\ldots ,0\right] ^{T}.$\bigskip
\end{corollary}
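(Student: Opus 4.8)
The plan is to mirror the argument for Corollary 1 (the inverse of $M$), exploiting the fact that $K$ differs from $M$ only in its $(2,1)$-entry. First I would partition $K$ as a block lower-triangular matrix
\[
K=\begin{pmatrix} 1 & 0 \\ \mathbf{c} & K_{22}\end{pmatrix},\qquad \mathbf{c}=(-3,-1,0,\ldots,0)^{T},
\]
where $K_{22}$ is the trailing $(n-1)\times(n-1)$ principal submatrix of $K$ and the top-right block vanishes because the first row of $K$ is $(1,0,\ldots,0)$. The standard inversion formula for a block lower-triangular matrix gives $K^{-1}=\left(\begin{smallmatrix}1&0\\ -K_{22}^{-1}\mathbf{c}& K_{22}^{-1}\end{smallmatrix}\right)$, so matching this against the claimed form $\left(\begin{smallmatrix}1&0\\ D& E\end{smallmatrix}\right)$ reduces the whole statement to the two identities $E=K_{22}^{-1}$ and $D=-K_{22}^{-1}\mathbf{c}$.

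The key observation is that deleting the first column of $K$ leaves exactly the matrix obtained by deleting the first column of $M$ (the two differ only in the $(2,1)$-entry, which lies in that column), so $K_{22}=M_{22}$. Applying the same block inversion to $M=\left(\begin{smallmatrix}1&0\\ \mathbf{c}_{M}& M_{22}\end{smallmatrix}\right)$ in Corollary 1 already yields $M_{22}^{-1}=C$; hence $E=K_{22}^{-1}=C$, which is precisely the Hankel matrix described in the statement. Thus the lower-right block needs no new work, and only the first column $D$ is genuinely new.

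It remains to compute $D=-K_{22}^{-1}\mathbf{c}=C\,(3,1,0,\ldots,0)^{T}=3c^{(1)}+c^{(2)}$, where $c^{(1)},c^{(2)}$ are the first two columns of the Hankel matrix $C$. Reading these off, $c^{(1)}=(P_{n-1},P_{n-2},\ldots,P_{1})^{T}$ and $c^{(2)}=(P_{n-2},\ldots,P_{1},0)^{T}$, so the $j$-th entry of $D$ is $3P_{n-j}+P_{n-j-1}$ (with $P_{0}=0$). The Pell recurrence $P_{n-j+1}=2P_{n-j}+P_{n-j-1}$ together with the Pell--Pell-Lucas relation $Q_{k}=P_{k+1}+P_{k-1}$, immediate from the Binet formulas (\ref{2})--(\ref{3}), gives $3P_{n-j}+P_{n-j-1}=P_{n-j+1}+P_{n-j}=\tfrac{1}{2}Q_{n-j+1}$, whence $D=(\tfrac{Q_{n}}{2},\tfrac{Q_{n-1}}{2},\ldots,\tfrac{Q_{2}}{2})^{T}$, as claimed.

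The only place that needs genuine care is this last identity linking the Pell entries of $C$ to the Pell-Lucas entries of $D$: one must correctly identify the first two columns of the Hankel matrix $C$ and invoke $Q_{k}=P_{k+1}+P_{k-1}$, everything else being inherited from Corollary 1. Equivalently, and in the spirit of the proof of Corollary 1, one may instead row-reduce the augmented matrix $[K:I_{n}]$: the elimination in columns $2,\ldots,n$ is identical to that for $M$ and reproduces $C$, while clearing the $(2,1)$-entry $-3$ (in place of $M$'s $-2$) contributes the extra $P_{n-j}$ per entry that upgrades the Pell column of Corollary 1 to the Pell-Lucas column $D$.
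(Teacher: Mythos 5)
Your proposal is correct, and it is a genuinely different (and more substantive) route than the paper's. The paper's entire proof of this corollary is the one-line assertion that $K^{-1}$ falls out of row-reducing the augmented matrix $[K:I_{n}]$, with no computation shown; you instead exploit the block lower-triangular structure $K=\left(\begin{smallmatrix}1&0\\ \mathbf{c}&K_{22}\end{smallmatrix}\right)$, observe that $K_{22}=M_{22}$ because $K$ and $M$ differ only in the $(2,1)$-entry, and thereby inherit $E=K_{22}^{-1}=M_{22}^{-1}=C$ directly from Corollary 5, reducing everything to the single new computation $D=C(3,1,0,\ldots,0)^{T}$. That last step is handled cleanly: you correctly read off the first two columns of the Hankel matrix $C$ as $(P_{n-1},\ldots,P_{1})^{T}$ and $(P_{n-2},\ldots,P_{1},0)^{T}$, and the chain $3P_{k}+P_{k-1}=P_{k+1}+P_{k}=\tfrac{1}{2}Q_{k+1}$ (via the Pell recurrence and $Q_{k}=P_{k+1}+P_{k-1}$) is exactly the identity needed to turn the Pell entries into the claimed Pell--Lucas column $(\tfrac{Q_{n}}{2},\ldots,\tfrac{Q_{2}}{2})^{T}$; a spot check at $n=4$ confirms $D=(17,7,3)^{T}$ and $KK^{-1}=I$. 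What your approach buys is an actual proof with the dependence on the earlier corollary made explicit and the only new content isolated in one small identity, whereas the paper's stated method would require redoing the full elimination from scratch; the one thing to keep in mind is that your argument is only as strong as Corollary 5, which the paper likewise leaves unverified.
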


\begin{proof}
The matrix $K^{-1}$ is obtained easily by applying elementary row\linebreak
operations to the augmented matrix $[K:I_{n}].$
\end{proof}

\end{document}